\documentclass{amsart}
\usepackage[utf8]{inputenc}

\usepackage{algorithm}
\usepackage{algpseudocode}
\usepackage{mathtools}

\usepackage{commath, graphicx}

\usepackage{amsmath, amsfonts, amsthm, amssymb, amscd, multirow, tikz}


\theoremstyle{plain}
\newtheorem{theorem}{Theorem}[section]
\newtheorem{proposition}[theorem]{Proposition}
\newtheorem{lemma}[theorem]{Lemma}

\theoremstyle{definition}
\newtheorem{definition}[theorem]{Definition}

\theoremstyle{remark}
\newtheorem{remark}[theorem]{Remark}
\newtheorem{example}[theorem]{Example}

\theoremstyle{remark}

\newcommand{\cc}{\bb{C}}

\renewcommand{\S}{\sum}

\newcommand{\e}{\varepsilon}
\newcommand{\bb}{\mathbb}
\newcommand{\mc}{\mathcal}

\newcommand{\f}{\mathfrak}

\newcommand{\llp}{\left(}
\newcommand{\rrp}{\right)}


\usepackage{bookmark}

\newcommand{\Deg}{\operatorname{Deg}}
\DeclareMathOperator{\Char}{\operatorname{char}}

\renewcommand{\del}{\partial}

\newcommand{\ol}{\overline}

\newcommand{\jt}{\widetilde{j}}
\DeclareMathOperator{\End}{End}
\DeclareMathOperator{\Gal}{Gal}

\usepackage{xcolor}

\title{Computing isogenies at singular points of the modular polynomial}
\author{William E. Mahaney and Travis Morrison}
\date{}

\begin{document}
\maketitle

\begin{abstract}
In this paper we present a method which, given a singular point $(j_1, j_2)$ on $Y_0(\ell)$ with $j_1, j_2 \neq 0, 1728$ and an elliptic curve $E$ with $j$-invariant ${j_1}$, returns an elliptic curve $\widetilde{E}$ with $j$-invariant ${j_2}$ that admits a normalized $\ell$-isogeny $\phi\colon E\to \widetilde{E}$. The isogeny $\phi$ can then be efficiently computed from $E$ and $\widetilde{E}$ using an algorithm of Bostan, Morain, Salvy, and Schost. 
\end{abstract}

\section{Introduction}
For an elliptic curve $E$ defined over a field $k$, a separable isogeny $\phi\colon E\to E'$ is determined (up to post-composition with an isomorphism) by its kernel. 
This result is effective: for example, V\'{e}lu~\cite{velu71} gives formulas for computing $\phi$ (as a rational map) given the points in $\ker\phi$, and Kohel~\cite[Section 2.4]{Koh96} gives formulas for computing these rational maps from the {\em kernel polynomial} of $\phi$, the monic polynomial which vanishes precisely at the $x$-coordinates of the nontrivial points in $\ker\phi$. The methods of V\'{e}lu and Kohel also give an equation for $E'$, the codomain of $\phi$. The resulting isogeny is always {\em normalized}, meaning the invariant differential $\omega'$ of $E'$ pulls back, under $\phi$, to the invariant differential $\omega$ of $E$. The isogeny $\phi$ will be defined over $k$ if $\ker\phi$ is $\Gal(\overline{k})$-stable, or equivalently, if the kernel polynomial of $\phi$ has coefficients in $k$. 

As mentioned above, a (description of) a cyclic subgroup $K$ of $E$ determines a unique normalized isogeny $E\to  E/K$. Alternatively, given $E'$ and $d$ such that there exists a cyclic, normalized $d$-isogeny $\phi \colon E \to E'$, this isogeny can be computed with Stark's algorithm~\cite{Stark73} and even more efficiently with an algorithm due to Bostan, Morain, Salvy, and Schost~\cite{BMSS}. Suppose now one only knows there exists a not necessarily normalized isogeny between $E$ and $E'$  of prime degree $\ell$, perhaps because one has found that $\Phi_{\ell}(j(E),j(E'))=0$ where $\Phi_{\ell}(X,Y)$ is the {\em modular equation}.
Provided that $\Char(k)=0$ or is sufficiently large 
relative to $\ell$, an algorithm of 
Elkies~\cite[Section 3]{Elkies} can compute first a 
curve $\widetilde{E}\cong E'$ such that a normalized 
$\ell$-isogeny $\phi\colon E\to \widetilde{E}$ exists, 
and then the kernel polynomial for $\phi$,  
provided that $(j(E),j(E'))$ is a non-singular point on 
$\Phi_{\ell}(X,Y)=0$ (see also~\cite[Section 7]{Schoof} for further details and~\cite[Section 25.2.1]
{Galbraith} for pseudocode). This algorithm is an 
important subroutine in the SEA algorithm for point-counting on elliptic curves over finite fields: for an ordinary curve $E$ defined over $\bb{F}_q$, one computes a kernel polynomial for primes $\ell$ which split in $\End(E)\otimes\bb{Q}$ in order to compute the trace of the Frobenius endomorphism of $E$ modulo $\ell$. 

Elkies' method for computing an equation for the isogenous curve as described in~\cite{Schoof} works as follows. Suppose $E:y^2=x^3+Ax+B$ is an elliptic curve over $\bb{C}$ which is $\ell$-isogenous to the curve $E':y^2=x^3+A'x+B'$. Then there exists $q_0=\exp(2\pi i \tau_0)\in \bb{C}$ such that $A=-G_4(q_0)/48$ and $B=G_6(q_0)/864$ and $j(E)=j(q_0)$ where $G_4$ and $G_6$ are the Eisenstein series of weights $4$ and $6$ respectively and $j(q)$ is the $j$-function (in their $q$-expansions). Moreover, if we set $\widetilde{j}(q)\coloneqq j(q^{\ell})$, then $\widetilde{j}(q_0)=j(E')$ and setting 
\[
\widetilde{A} = \frac{-\ell^4}{48} \cdot \frac{ \jt(q_0)'^2}{\jt(q_0) (\jt(q_0) - 1728) },\quad 
\widetilde{B} = \frac{-\ell^6}{864} \cdot \frac{\jt(q_0)'^3}{\jt(q_0)^2 (\jt(q_0) - 1728) }
\]
yields coefficients for a short Weierstrass equation for $E'$ such that the $\ell$-isogeny $\phi\colon E\to \widetilde{E}$ is normalized. Therefore one can compute $\widetilde{E}$ as soon as one obtains $\widetilde{j}'(q_0)$. Letting $\Phi(X,Y)\coloneqq\Phi_{\ell}(X,Y)$ denote the $\ell$-th modular polynomial parametrizing $\ell$-isogenous $j$-invariant pairs, we have that $\Phi_{\ell}(j(q),\widetilde{j}(q))$ is identically zero as a power series in $q$. Differentiating with respect to $q$ yields the identity 
\[
j'(q_0)\Phi_X(j(q_0),\widetilde{j}(q_0))+\ell \widetilde{j}'(q_0)\Phi_Y(j(q_0),\widetilde{j}(q_0))=0
\]
so, {\em assuming $(j(E),j(E'))$ is a nonsingular point on $\Phi(X,Y)=0$}, one can solve for $\widetilde{j}'$ and hence $\widetilde{E}$ in terms of the equation for $E$ and the $j$-invariant of $E'$. 

If $(j(E),j(E'))$ is a singular $k$-point on $Y_0(\ell): \Phi_{\ell}(X,Y)=0$, there are (at least) two $\ell$-isogenies $\phi,\psi\colon E\to E'$ with distinct kernels. 

Schoof writes that the approach to computing the isogenous curve $\widetilde{E}$ ``does not work if $\Phi_X(j,\widetilde{j})=\Phi_Y(j,\widetilde{j})=0$.'' However, the broader idea -- compute $\widetilde{j}'$ in order to find the model  $\widetilde{E}$ --  does. In this paper, we take up the problem of computing an $\ell$-isogeny $E\to E'$ when $(j(E),j(E'))$ is a singular point on $Y_0(\ell)$. One simply needs more derivatives of the identity $\Phi_{\ell}(j,\widetilde{j})=0$. 

In Section~\ref{sec:singular}, we find that if $(j_1,j_2)$ is a singular point of multiplicity $m$ on $\Phi_{\ell}(x,y)=0$, we can compute a degree $m$ polynomial whose roots yield the $m$ choices for $\widetilde{j}'$ and hence the $m$ models with $j$-invariant $j_2$ admitting normalized $\ell$-isogenies, under the assumption that $j(E),j(E')\not=0,1728$ (which is also assumed in~\cite{Schoof}) and the characteristic of the field is zero or is sufficiently large relative to $\ell$.  With the equation for $\widetilde{E}$, the algorithm of~\cite{BMSS} computes $\phi$ efficiently. This allows one to compute {\em all} $\ell$-isogenies of $E$ efficiently. 

In Section~\ref{sec:background}, we  recall relevant definitions and notation. In Section~\ref{sec:elkies}, we review Elkie's method for computing an equation for the isogenous curve. In Section~\ref{sec:singular}, we extend Elkies' method for computing the equation for the isogenous curve at a singular point of the modular equation. Finally, in Section~\ref{sec:algorithm}, we give pseudocode for our algorithm and an example. Our implementation in Sagemath~\cite{sage} is available at \url{https://github.com/wmahaney/ModularMultipoints}.

\section{Background}\label{sec:background}
In this section, we include some background elliptic curves, modular curves, and singularities. We refer the reader to ~\cite{Silverman,DiamondShurman,Fulton} for further background.
\subsection{Elliptic Curves}

Let $k$ be a field. An elliptic curve over $k$ is a smooth projective genus $1$ curve with a $k$-rational point. We will assume throughout that the characteristic of $k$ is not $2$ or $3$ and that our elliptic curves are given by a {\em short Weierstrass equation over $k$}, that is, an equation of the form $y^2 =  x^3 + Ax + B$ where $x,y$ are indeterminates and $A, B \in k$ satisfy $4A^3 + 27B^2  \neq 0$. 
Given an elliptic curve $E: y^2 = x^3 + Ax + B$ the {\em  invariant differential} of $E$ is defined as \cite[III.5]{Silverman}
\[
\omega_E := \frac{dx}{2y} \in \Omega_E
\]
where $\omega_E$ is regarded as a differential on $E$. The {\em  $j$-invariant} of $E$ is given by  \[
j(E) := \frac{-1728(4A)^3   }{ -16(4A^3 + 27B^2)  }   = \frac{4 \cdot 1728A^3}{4A^3 + 27B^2}
\]
Two elliptic curves $E, E'$ defined over $k$ are $\ol{k}$-isomorphic if and only if $j(E) = j(E')$.  Defining
\[
E_j: y^2=x^3+3j(1728-j)x+2j(1728-j)^2,\quad j\not=0,1728\]
\[
 E_0: y^2=x^3+1,\quad E_{1728}:y^2=x^3+x
\]
produces curves $E_j$ such that $j(E_j)=j$ for any $j\in k$.

Given a pair of elliptic curves $E,E'$ an {\em isogeny} \cite[III.4]{Silverman} from $E \to E'$ is a non-constant morphism $\phi\colon E \to E'$ of projective varieties such that $\phi(\infty_{E}) = \phi(\infty_{E'})$.
Any isogeny from $E \to E'$ induces a group homomorphism $\phi\colon E \llp \ol{k} \rrp \to E' \llp \ol{k} \rrp$ \cite[III.4.8]{Silverman}. The {\em degree} of an isogeny is its degree as a rational map. Every isogeny $\phi\colon E\to E'$ has a {\em dual isogeny} $\widehat{\phi}\colon E'\to E$ which satisfies $\widehat{\phi}\circ\phi = [\deg\phi]$, the multiplication-by-$\deg\phi$ map. 
Two elliptic curves $E, E'$ are {\em $n$-isogenous}, for $n \ge 1$, provided there is an isogeny $\phi\colon E \to E'$ of degree $n$. We say that two $j$-invariants $j_1, j_2 \in K$ are {\em $n$-isogenous} provided $E_{j_1}, E_{j_2}$ are $n$-isogenous. 

An isogeny $\phi\colon E \to E'$ between elliptic curves is {separable} 
if it is separable as a rational map. If $\phi$ is separable, then $\deg\phi =\#\ker\phi$. 
We say that a separable isogeny $\phi\colon E \to E'$  is {cyclic} provided its kernel is a cyclic subgroup of $E(\ol{k})$. Every isogeny $\phi$ factors as $\phi=\phi'\circ[n]\circ\pi^n$ where $\pi$ is the $p$-power Frobenius isogeny, $[n]$ is the multiplication-by-$n$ map, and $\phi'$ is separable and cyclic.
And every $n$-isogeny $\phi\colon E \to E'$ can be factored as a composition of prime degree isogenies, so we will focus primarily on separable isogenies of prime degree.  

\subsection{Modular polynomials, curves, and functions}

Let $\ell$ be a prime. The modular curve $X_0(\ell)$ is an algebraic curve defined over $\mathbb{Q}$ whose non-cuspidal points parametrize elliptic curves with choice of $\ell$-isogeny. 
The curve $X_0(\ell)$ is given by the {\em modular equation} $\Phi_{\ell}(X,Y)=0$ which defines a plane curve $Y_0(\ell)$; we have that $(j_1,j_2)$ is a point on $Y_0(\ell)$ if and only if there are elliptic curves $E_{j_1},E_{j_2}$ and a cyclic $\ell$-isogeny $\phi\colon E_{j_1}\to E_{j_2}$. The curve $X_0(\ell)$ is the nonsingular projective model of $Y_0(\ell)$. Thus if $E$ is an elliptic curve and $K$ is a cyclic subgroup of $E$ of order $\ell$, then the point $[E,K]$ on $X_0(\ell)$ maps to the point $(j(E),j(E/K))$ on $\Phi_{\ell}(x,y)=0$. 
The modular polynomial $\Phi_\ell(X,Y)$ is irreducible, symmetric, and of the form $X^\ell Y^\ell + X^{\ell +1}+ Y^{\ell + 1} + \ldots$ where $\ldots$ denotes terms of lower degree \cite[Section 6]{Schoof}. 

There are some particular modular forms we are interested in for this paper, which we will regard in their $q$-expansions. The first are the weights 4 and 6 normalized (constant coefficient one) Eisenstein series $ G_4(q)$ and $G_6(q)$ which are modular forms of weights $4$ and $6$ respectively. There is also the $j$-function given by \[
j(q) := 1728 \frac{G_4(q)^3}{G_4(q)^3 - G_6(q)^2}
\]
The $j$-function parametrizes $j$-invariants of elliptic curves over $\cc$; in particular for a given $z \in \cc$ there exists a $q_0$ with $j(q_0) = z$ and the elliptic curve over $\cc$ with short Weierstrass equation \[
E: y^2 = x^3 - \frac{G_4(q_0)}{48} + \frac{G_6(q_0)}{864}
\]
has $j$-invariant $j(q_0)$. 

\subsection{Singularities of Plane Curves}
Two $j$-invariants $(j_1, j_2) \in k^2$ are $\ell$-isogenous over $k$ if and only if $\Phi_\ell(j_1, j_2) = 0$, so the $k$-rational points of $Y_0(\ell)/k$ parameterize pairs of $k$-rational $\ell$-isogenous $j$-invariants. While $X_0(\ell)/ k$ is smooth, $Y_0(\ell)/k$ is not generally smooth. Before discussing what singularities $Y_0(\ell)/k$ has let us define singularities and their associated data:

\begin{definition} (Singularities of Plane Curves)
\cite[Section 3.1]{Fulton} \\
Let $F(x,y) \in k[x,y]$ be a non-constant polynomial. A point $(a,b) \in k^2$ is a {\em multiple point} or {\em singular point} of $F$ provided \[
F(a,b) = \frac{\del F}{\del x} (a,b) = \frac{\del F}{\del y}(a,b) = 0
\]
where the derivatives are formal. 
Suppose $(a,b)$ is a singular point of $F$. Via change of variables define the polynomial \[
H(x, y) = F(x+a, y+b).
\]
As $H(0,0) = F(a,b) = 0$ we can write \[
H(x,y) = H_m(x,y) + \ldots + H_{\Deg(H)}(x,y),
\]
where each $H_d$ is homogeneous of degree $d$ and $H_m \neq 0$. The {multiplicity} of the singularity $(a,b)$ of $F$ is $m$. 
For a curve $C = \mc{V}(F)$, a point $(a,b)$ on $C$ is a singular point provided $(a, b)$ is a singular point of $F$ and vice versa. 
\end{definition}

\section{Elkies' method to compute the isogenous curve}\label{sec:elkies}
In this section, we review ideas of Elkies~\cite[Section 3]{Elkies} and formulas of Schoof~\cite[Section 7]{Schoof} for computing, given an elliptic curve $E$ and prime $\ell$, an equation for an elliptic curve $\widetilde{E}$ such that there exists a normalized $\ell$-isogeny $\phi\colon E\to \widetilde{E}$. 

Per \cite[Proposition 7.1 (i)]{Schoof} we have an equality of power series \begin{align*}
\frac{j'(q)}{j(q)} = - \frac{G_6(q)}{G_4(q)}
\end{align*}
where the above functions are considered as modular forms in their $q$-expansions; this equality gives another \[
j'(q) = - \frac{G_6(q)}{G_4(q)} j(q)
\]
away from $q_0$ for which $j(q_0) \in \{0, \infty\}$.  
Per \cite[Proposition 7.2]{Schoof} there exist formal power series $X(\zeta; q), Y(\zeta; q) \in \bb{Z} \Big{[}\Big{[} q, \zeta, \frac{1}{\zeta - \zeta^2} \Big{]} \Big{ ] }$ for which \begin{align*}
Y(\zeta; q)^2 = X(\zeta; q)^3 - \frac{G_4(q)}{48} X(\zeta; q) + \frac{G_6(q)}{864} 
\end{align*}
If one took the substitution \begin{align*}
A(q) & := \frac{-G_4(q)}{48} \\
B(q) & := \frac{G_6(q)}{864}
\end{align*}
Then the prior equality of power series becomes \[
Y(\zeta; q)^2 = X(\zeta; q)^3 + A(q)X(\zeta; q) + B(q)
\] and each fixed $q_0$ we have a short Weierstrass equation for an elliptic curve over $\cc$ whose affine points are the pairs $\llp X(\zeta(z); q_0), \pm Y(\zeta(z); q_0) \rrp$ where $\zeta(z) = e^{2i \pi z}$ and $z$ ranges over $\cc$. 
Fix a particular $q_0$. Then the curve $E: Y^2 = X^3 + A(q_0)X + B(q_0)$ is $\ell$-isogenous to the curve $\widetilde{E}\colon Y^2 = X^3 + A(q_0^\ell) X + B(q_0^\ell)$, and if one replaces $A(q_0^\ell), B(q_0^\ell)$ with $\ell^4 A(q_0^\ell)$ and $\ell^6 B(q_0^\ell)$ then the isogeny connecting the two curves is normalized~\cite{Elkies}. Using the identities presented in \cite[Proposition 7.1]{Schoof} one can compute formulas for $G_4(q_0), G_6(q_0)$ in terms of $j(q_0)$ and $j'(q_0)$, similarly one can compute formulas for $G_4(q_0^\ell)$ and $G_6(q_0^\ell)$ in terms of $\jt(q_0)$ and $\jt'(q_0)$. Therefore if one can compute $\jt'(q_0)$ one can obtain a model for $\widetilde{E}$ which admits an isogeny from $E$, and by scaling this model appropriately compute a normalized model for this isogeny. 

Fix a prime $\ell$ and the function $\jt(q) = j(q^\ell)$. For every $q_0$ we have that $j(q_0), \jt(q_0)$ are $\ell$-isogenous $j$-invariants so as a function of $q$ we have $\Phi_\ell(j, \jt) = 0$. Moreover if we take the derivative of the function $0 = \Phi_{\ell}(j, \jt)$ by applying the multivariate chain rule we get 
\[
0 = \llp \Phi_{\ell}(j, \jt) \rrp' = j' \Phi_X(j, \jt) + \ell \jt' \Phi_Y(j, \jt).
\]
Solving for $\widetilde{j}'$, assuming $\Phi_y(j, \jt) \neq 0$, and then setting 
\begin{align}
\widetilde{A} = \frac{-\ell^4}{48} \cdot \frac{ \jt'^2}{\jt (\jt - 1728) } \\
\widetilde{B} = \frac{-\ell^6}{864} \cdot \frac{\jt'^3}{\jt^2 (\jt - 1728) }
\end{align}
yields an equation $Y^2=X^3+\widetilde{A}X+\widetilde{B}$ for the isogenous curve admitting a normalized isogeny from $E$ whose points are the pairs $\llp X(\zeta(z); q_0^\ell), \pm Y(\zeta(z); q_0^\ell) \rrp$ as $z$ ranges over $\cc$. 

Even though all the above is derived from properties of modular functions over $\cc$, the Deuring lifting theorem justifies that these computations can be performed over a finite field as well, provided the characteristic is not $2,3$, or $\ell$.

\section{Singular points on \texorpdfstring{$Y_0(\ell)$}{Y0(l)}}\label{sec:singular}
In this section, we observe that the multiplicity of a point $(j_1,j_2)$ on $Y_0(\ell)$ is equal to the number of cyclic $\ell$-isogenies from an elliptic curve with $j$-invariant $j_1$ to an elliptic curve with $j$-invariant $j_2$. We use this in the following section in order to compute normalized models for $\ell$-isogenies at singular points of $Y_0(\ell)$.

\begin{proposition}
Suppose $j_1,j_2$ are two elements of $k$, neither of which are $0$ or $1728$. Let $(j_1,j_2)$ be a point on $Y_0(\ell)$ and let $m$ be the number of  $\ell$-isogenies with distinct kernels between two fixed elliptic curves $E_1$ and $E_2$ with $j$-invariants $j_1$ and $j_2$. If $\Char(k)=0$ or $\Char(k)>m$, then $(j_1,j_2)$ is a point of multiplicity $m$ on $Y_0(\ell)$. In particular,  all partial derivatives of $\Phi_\ell(X,Y)$ of weight less than $m$ vanish at $(j_1,j_2)$. Moreover,  both $\Phi_{X^m}(j_1,j_2))$ and $\Phi_{Y^m}(j_1,j_2)$ are nonzero.    
\end{proposition}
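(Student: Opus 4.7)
The plan is to compute the multiplicity of $\Phi_\ell$ at $(j_1,j_2)$ by counting the analytic branches of $Y_0(\ell)$ there via the normalization $\pi\colon X_0(\ell)\to Y_0(\ell)$, $[E,K]\mapsto(j(E),j(E/K))$. First I would identify $\pi^{-1}((j_1,j_2))$: it consists of the moduli points $[E_1,K]$ with $K$ a cyclic subgroup of order $\ell$ of $E_1$ satisfying $j(E_1/K)=j_2$. Since $j_1\neq 0,1728$, the automorphism group $\operatorname{Aut}(E_1)=\{\pm1\}$ acts trivially on every such $K$, so the $m$ distinct kernels $K_1,\ldots,K_m$ yield exactly $m$ distinct preimages $P_1,\ldots,P_m\in X_0(\ell)(\ol k)$.

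Next I would invoke the classical fact that the two forgetful maps $j,\jt\colon X_0(\ell)\to X(1)$ are ramified only over $j\in\{0,1728,\infty\}$. Since $j_1,j_2\neq 0,1728$ and each $P_i$ is non-cuspidal, both $j-j_1$ and $\jt-j_2$ are uniformizers at $P_i$; hence the branch of $Y_0(\ell)$ at $(j_1,j_2)$ through $P_i$ admits a smooth local parametrization
\[
t\mapsto\bigl(j_1+t,\;j_2+s_it+O(t^2)\bigr),\qquad s_i:=\frac{d\jt}{dj}(P_i)\in\ol k^{\,\times},
\]
with finite, nonzero tangent slope $s_i$. The standard fact that the multiplicity of an affine plane curve at a point equals the sum of the multiplicities of its analytic branches there (\cite[Ch.~3]{Fulton}), combined with each of our $m$ branches having multiplicity one, then gives multiplicity exactly $m$ for $\Phi_\ell$ at $(j_1,j_2)$. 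In particular, every coefficient of total degree $<m$ in the Taylor expansion of $\Phi_\ell$ around $(j_1,j_2)$ vanishes, so all partial derivatives of $\Phi_\ell$ of total order $<m$ vanish at $(j_1,j_2)$.

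Finally, the tangent cone, i.e.\ the degree-$m$ part of this Taylor expansion, factors over $\ol k$ as $c\prod_{i=1}^m\bigl((Y-j_2)-s_i(X-j_1)\bigr)$ for some $c\neq 0$. Because no $s_i$ is $0$ or $\infty$, the coefficients of $(X-j_1)^m$ and of $(Y-j_2)^m$ in this product are both nonzero. Under the hypothesis $\Char(k)=0$ or $\Char(k)>m$, $m!$ is invertible in $k$, so $\Phi_{X^m}(j_1,j_2)$ and $\Phi_{Y^m}(j_1,j_2)$, being $m!$ times these coefficients, are both nonzero. The main subtlety I expect is justifying that $X_0(\ell)$ really provides the local normalization of $Y_0(\ell)$ at $(j_1,j_2)$ and that the moduli count of kernels genuinely matches the number of analytic branches of the plane curve; the hypotheses $j_1,j_2\neq 0,1728$ are precisely what is needed to exclude the extra automorphisms and ramification at those special $j$-invariants that would otherwise disrupt this correspondence.
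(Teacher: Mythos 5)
Your proof is correct, but after the common first step it takes a genuinely different route from the paper's. Both arguments begin by identifying the fiber of $X_0(\ell)\to Y_0(\ell)$ over $(j_1,j_2)$ with the $m$ kernels, using $j_1,j_2\neq 0,1728$ to rule out identifications by extra automorphisms. From there the paper splits the claim into a lower bound (the $m$ points upstairs give tangent directions downstairs, so the multiplicity is at least $m$) and an upper bound extracted from the one-variable polynomial $\phi(Y)=\Phi_\ell(j_1,Y)=\prod_K\bigl(Y-j(E_1/K)\bigr)$: since $j_2$ is a root of multiplicity exactly $m$ and $m!$ is invertible, $\phi^{(m)}(j_2)=\Phi_{Y^m}(j_1,j_2)\neq 0$, which simultaneously caps the multiplicity at $m$ and gives the nonvanishing statement ($\Phi_{X^m}$ then follows from the symmetry of $\Phi_\ell$ and the dual isogenies). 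You instead obtain the exact multiplicity and both nonvanishing statements from the local structure at the point: each formal branch is smooth with finite nonzero slope because $j-j_1$ and $\jt-j_2$ are both uniformizers at each preimage, so the multiplicity is the number of branches and the tangent cone is a product of lines that are neither horizontal nor vertical. What the paper's route buys is elementariness --- the upper bound and the nonvanishing need only the classical factorization of $\Phi_\ell(j_1,Y)$ --- while your route requires justifying, in characteristic $p$, the unramifiedness of $j,\jt\colon X_0(\ell)\to X(1)$ away from $0,1728,\infty$ (which does hold for $p>3$, $p\neq\ell$, by the same automorphism count) and the identification of the multiplicity with the sum of branch multiplicities; you rightly flag these as the delicate points. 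What your route buys is robustness of the lower bound: it does not require the $m$ slopes $s_i$ to be distinct, whereas the paper's lower bound as literally stated counts \emph{tangent directions}, and two branches can share a tangent precisely when two distinct kernels yield the same normalized codomain (possible when $0<\Char(k)<4\ell$, cf.\ Lemma~\ref{lem:multi_normalized} and the example following Proposition~\ref{prop:models_are_roots}); your branch count handles that case without further comment.
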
 \label{prop:partialsvanish}

\begin{proof}
We can identify $X_0(\ell)$ as the blowup of $Y_0(\ell)$ at its singular points. The non-cuspidal points on $X_0(\ell)$ then correspond to tangent directions at points $(j_1,j_2)$ of $Y_0(\ell)$. Suppose there are $m$ distinct points on $X_0(\ell)$ above $(j_1,j_2)$. Then the multiplicity of $(j_1,j_2)$ on $Y_0(\ell)$ is at least the number of tangent directions at $P$ and therefore at least $m$. On the other hand, the number of $\ell$-isogenies between $j_1$ and $j_2$ is equal to the multiplicity of $j_2$ as a root of the degree $\ell+1$ polynomial $\phi(Y)\coloneqq \Phi(j_1,Y)$. If $m<\ell+1$, the assumption on the characteristic of $\Char(k)$ implies $\phi^{(m)}(j_2)\not=0$ and therefore the multiplicity is at most $m$. If $m=\ell+1$ then $(j_1,j_2)$ is a point of multiplicity $m$ since its multiplicity is at most $\ell+1$. 
\end{proof}

In the following section, we will need to compute $\widetilde{j}'$ at singular points by finding it as a root of a polynomial obtained by differentiating the modular equation $m+1$ times, where $m$ is the multiplicity of the singularity. A straightforward argument via induction gives the following expression for the $n$th derivative of the modular equation:
\begin{proposition} \label{prop:diffeq}
Fix the functions $j(q)$ and $\widetilde{j}(q) \coloneqq j(q^\ell)$ in their $q$-expansions as well as the $\ell$th modular polynomial $\Phi = \Phi_\ell(X,Y)$. 
Then for all integers $n \ge 2$ the $n$-fold derivative of $\Phi(j, \jt)$ is given by \[
\llp \Phi(j, \jt) \rrp^{(n)} = \e_n(q) + F_n(q)
\]
where $\e_n(q)$ is of the form
\[
\e_n(q) = \S_{0 \le u+v < n} \e_{u,v}(q) \Phi_{X^u Y^v}(j, \jt)
\]
and $F_n(q)$ is of the form \[
F_n(q) = \S_{u=0}^n \binom{n}{u} \ell^{n-u} (j')^u (\jt')^{n-u} \Phi_{X^u Y^{n-u}}(j, \jt).
\]
\end{proposition}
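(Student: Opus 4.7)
The plan is to proceed by induction on $n$, where the inductive hypothesis is exactly the displayed decomposition $\Phi(j,\jt)^{(n)} = \e_n(q) + F_n(q)$ with $F_n$ of the stated binomial form and $\e_n$ a sum of terms involving only partial derivatives of $\Phi$ of weight less than $n$. The base case $n=2$ follows directly from differentiating the first-order identity $j'\Phi_X(j,\jt)+\ell\jt'\Phi_Y(j,\jt)=0$ recalled in Section~\ref{sec:elkies}: applying the chain rule to $\Phi_X$ and $\Phi_Y$ yields $F_2 = (j')^2\Phi_{XX}+2\ell j'\jt'\Phi_{XY}+\ell^2(\jt')^2\Phi_{YY}$, while differentiating the coefficients produces the lower-weight remainder $\e_2 = j''\Phi_X + \ell \jt''\Phi_Y$.

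For the inductive step, I would differentiate $\e_n(q)+F_n(q)$ once more, using repeatedly the chain-rule identity
\[
\frac{d}{dq}\Phi_{X^aY^b}(j,\jt) = j'\Phi_{X^{a+1}Y^b}(j,\jt) + \ell\jt'\Phi_{X^aY^{b+1}}(j,\jt).
\]
Differentiating $\e_n(q)$ only raises the total weight of any partial of $\Phi$ by one, so it produces terms of weight at most $n$, all absorbed into $\e_{n+1}$. Differentiating the monomial coefficient $(j')^u(\jt')^{n-u}$ of a typical term of $F_n$ introduces a factor of $j''$ or $\jt''$ multiplying a partial $\Phi_{X^uY^{n-u}}$ of weight $n$, again contributing only to $\e_{n+1}$.

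The substantive computation is the top-weight contribution of $dF_n/dq$ obtained by applying the chain-rule identity to each $\Phi_{X^uY^{n-u}}$. This produces the two sums
\[
\S_{u=0}^{n}\binom{n}{u}\ell^{n-u}(j')^{u+1}(\jt')^{n-u}\Phi_{X^{u+1}Y^{n-u}} + \S_{u=0}^{n}\binom{n}{u}\ell^{n-u+1}(j')^u(\jt')^{n-u+1}\Phi_{X^uY^{n-u+1}}.
\]
Reindexing the first sum by $u\mapsto u-1$ puts both sums in the common form $\ell^{n+1-u}(j')^u(\jt')^{n+1-u}\Phi_{X^uY^{n+1-u}}$, with $u$ ranging over $0,\ldots,n+1$. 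Pascal's identity $\binom{n}{u-1}+\binom{n}{u}=\binom{n+1}{u}$ combines the overlapping coefficients for $1\le u\le n$ into $\binom{n+1}{u}\ell^{n+1-u}$, and the boundary terms $u=0$ and $u=n+1$ contribute $\ell^{n+1}(\jt')^{n+1}\Phi_{Y^{n+1}}$ and $(j')^{n+1}\Phi_{X^{n+1}}$, matching $\binom{n+1}{0}$ and $\binom{n+1}{n+1}$. This reassembles exactly into $F_{n+1}$, completing the induction.

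The only real obstacle is clerical: the asymmetric factor of $\ell$ attached to $\jt'$ (coming from the chain rule through $\jt(q)=j(q^{\ell})$) produces powers of $\ell$ that must be tracked carefully when reindexing, and one must verify that the telescoped binomial coefficients agree at the boundary values of $u$. No genuinely new ideas beyond Pascal's identity and the chain rule are needed.
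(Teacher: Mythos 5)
Your induction is correct and is exactly the argument the paper has in mind --- the paper omits the proof entirely, remarking only that ``a straightforward argument via induction gives'' the formula, and your base case, your splitting of $dF_n/dq$ into lower-weight terms (absorbed into $\e_{n+1}$) plus the top-weight part, and your reindexing with Pascal's identity fill in that omitted argument faithfully. (The exact coefficient of $\Phi_Y$ in $\e_2$ depends on whether $\jt''$ denotes $j''(q^\ell)$ or the derivative of $\jt'$, but since the $\e_{u,v}$ are left unspecified in the statement this is immaterial.)
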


\section{Computing Reductions of \texorpdfstring{$\widetilde{j}'$}{j~'}}
Throughout this section, fix a field $k$, fix $j_1,j_2\in k$ with $j_1,j_2\not=0,1728$, let $E_{A,B}$ be a curve of $j$-invariant $j_1$, and define $j_1' = \frac{18B}{A}j_1$. 


\begin{definition} \label{def:FiberPolynomial}
    Suppose $P=(j_1,j_2)$ is a singular point of multiplicity $m$ on $Y_0(\ell)$ with $j_1,j_2\not=0,1728$. Define 
    \[
F_P(t)= \S_{u=0}^m \binom{m}{u} \ell^{m-u} (j_1')^u\Phi_{X^u Y^{m-u}}(P) t^{m-u}.
\]
\end{definition}
\begin{remark}
By Proposition~\ref{prop:partialsvanish} $F_P(t)$ is a degree $m$ polynomial whenever $m < \Char(k)$. 
\end{remark}

\begin{proposition} \label{prop:models_are_roots}
Fix a singular point $P = (j_1, j_2)$ of multiplicity $m$ on $Y_0(\ell)/k$ and the associated polynomial $F_P(t)$. Then for every $k$-model $(\widetilde{A}, \widetilde{B})$ for $E_{j_2}$ admitting a normalized cyclic $\ell$-isogeny from $E_{A, B}$ there exists a root $\widetilde{r}$ of $F_P(t)$ given by $\widetilde{r} = \frac{18 \widetilde{B}}{\ell^2 \widetilde{A} }j_2$.
\end{proposition}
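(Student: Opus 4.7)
The plan is to show that $\widetilde{r} = \frac{18\widetilde{B}}{\ell^2 \widetilde{A}} j_2$ is exactly the value of $\widetilde{j}'(q_0)$ appearing in Elkies' formulas for the normalized target model, and then to recognize $F_P(\widetilde{r})$ as the function $F_m(q_0)$ of Proposition~\ref{prop:diffeq}, which vanishes because of the order of the singularity at $P$.

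First I would invert Elkies' formulas for $\widetilde{A}$ and $\widetilde{B}$ (displayed in Section~\ref{sec:elkies}): taking the ratio of the expression for $\widetilde{B}$ to that for $\widetilde{A}$ gives
\[
\frac{\widetilde{B}}{\widetilde{A}} = \frac{\ell^2}{18}\cdot \frac{\widetilde{j}'(q_0)}{\widetilde{j}(q_0)},
\]
and solving yields $\widetilde{j}'(q_0) = \frac{18\widetilde{B}}{\ell^2 \widetilde{A}} j_2 = \widetilde{r}$. The analogous calculation for the domain, in which there is no scaling by $\ell^4$ or $\ell^6$, shows that the definition $j_1' = \frac{18B}{A} j_1$ recovers $j'(q_0)$ for the $q_0 \in \mathbb{C}$ corresponding to the model $E_{A,B}$; equivalently, $A = -G_4(q_0)/48$ and $B = G_6(q_0)/864$ combined with $j'/j = -G_6/G_4$ give $18B/A \cdot j_1 = j'(q_0)$.

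Next I would compare $F_P(t)$ from Definition~\ref{def:FiberPolynomial} with the expression $F_n(q)$ in Proposition~\ref{prop:diffeq}: setting $n = m$, $(j,\widetilde{j}) = P$, $j' = j_1'$, and $\widetilde{j}'(q_0) = t$ shows term by term that $F_P(\widetilde{r}) = F_m(q_0)$. Now Proposition~\ref{prop:diffeq} asserts
\[
\bigl(\Phi_\ell(j, \widetilde{j})\bigr)^{(m)}(q_0) = \varepsilon_m(q_0) + F_m(q_0);
\]
the left side is zero because $\Phi_\ell(j(q), \widetilde{j}(q))$ vanishes identically as a power series. The term $\varepsilon_m(q_0)$ is a combination of partials $\Phi_{X^u Y^v}(P)$ with $u + v < m$, all of which vanish by Proposition~\ref{prop:partialsvanish}. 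Hence $F_P(\widetilde{r}) = F_m(q_0) = 0$, as desired.

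The main obstacle is the passage from $\mathbb{C}$ to an arbitrary $k$: the identities I am using are derived from modular $q$-expansions, so to work in positive characteristic one invokes Deuring's lifting theorem as in Section~\ref{sec:elkies} to transfer the polynomial identity $F_P(\widetilde{r}) = 0$ to any $k$ with $\Char(k) > m$ and $\Char(k) \neq 2, 3, \ell$. A minor bookkeeping point to confirm is that the $q_0$ which produces $(A, B)$ is also the one producing the chosen normalized target $(\widetilde{A}, \widetilde{B}) = (\ell^4 A(q_0^\ell), \ell^6 B(q_0^\ell))$; this is precisely the content of choosing a normalized isogeny and accounts for the $\ell^2$ in the denominator of $\widetilde{r}$.
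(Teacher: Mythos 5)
Your proposal is correct and follows essentially the same route as the paper: identify $\widetilde{r}$ with $\widetilde{j}'(q_0)$ via the Eisenstein-series/Elkies formulas, recognize $F_P(\widetilde{r})$ as the top-order part $F_m(q_0)$ of the $m$-fold derivative of $\Phi_\ell(j,\widetilde{j})=0$, and kill $\varepsilon_m(q_0)$ using the vanishing of the lower-weight partials at the singular point. The paper merely makes your final ``Deuring lifting'' step explicit, lifting the normalized isogeny to a number field $L$ and carrying out the whole computation as a congruence modulo a prime $\mathfrak{P}$ of good reduction, which is the same idea you flag as the main obstacle.
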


\begin{proof}
    Let $\phi\colon E_{A,B}\to E_{ \widetilde{A}, \widetilde{B}}$ be a normalized cyclic $\ell$-isogeny defined over $k$ and define 
    \[
    \widetilde{r} = \frac{18\widetilde{B}}{\ell^2 \widetilde{A}}j_2.
    \]
Let $\varphi\colon \widehat{E_{A,B}}\to \widehat{E_{\widetilde{A},\widetilde{B}}}$ be a lift of $\phi$ to an $\ell$-isogeny of elliptic curves defined over a number field $L$ such that there exists a prime $\f{P}$ of $L$ where $\widehat{E_{A,B}}$ and $\widehat{E_{\widetilde{A},\widetilde{B}}}$ both have good reduction at $\f{P}$ and $\varphi$ reduces to $\phi$ modulo $\f{P}$; in the case that $k$ is a number field we take $L = k$ and $\f{P}$ is the zero ideal. There exists $q_0$ such that 
\begin{align*}
\frac{- G_4(q_0)}{48} & \equiv A\pmod{\f{P}} & \frac{G_6(q_0)}{864} &\equiv B \pmod{\f{P}} \\ 
\frac{-\ell^4 G_4(q_0^\ell)}{48} & \equiv \widetilde{A} \pmod{\f{P}} & \frac{\ell^6 G_6(q_0^\ell)}{864} & \equiv \widetilde{B} \pmod{\f{P}} 
\end{align*}
and we can realize $\varphi$ as the isogeny mapping \[
\widehat{E_{A, B}}: y^2 = x^3 - \frac{G_4(q_0)}{48} x+ \frac{G_6(q_0)}{864}
\]
to
\[
\widehat{E_{\widetilde{A}, \widetilde{B}}}: y^2 = x^3 - \frac{\ell^4 G_4(q_0)}{48}x + \frac{\ell^6 G_6(q_0)}{864}
\]
which is normalized over $L$~\cite[Section 3]{Elkies}. 
We then have an equality of power series giving an equivalence \[
\frac{\jt'(q_0)}{\jt(q_0)} = - \frac{ \ell^6 G_6(q_0^\ell)}{\ell^4 G_4(q_0^\ell)} \equiv \frac{-864 \ell^4 \widetilde{B}}{-48 \ell^6 \widetilde{A}} \equiv \frac{18 \widetilde{B}}{\ell^2 \widetilde{A}} \pmod{\f{P}}.
\]
Because $0 \neq j_2 \equiv \jt(q_0) \bmod{\f{P}}$ we have $\jt(q_0) \neq 0$ and so \[
\jt'(q_0) \equiv \frac{ 18 \widetilde{B} \jt(q_0) }{ \ell^2 \widetilde{A}} \equiv  \frac{18 \widetilde{B}}{\ell^2 \widetilde{A}} j_2 \equiv \widetilde{r} \pmod{\f{P}}
\]
So $\widetilde{r}$ is a reduction of $\jt'$ modulo $\f{P}$. As $j(q), \jt(q)$ are $j$-invariants for a pair of $\ell$-isogenous elliptic curves over $\cc$ for all $q$ we have that \[
 \Phi_{\ell}(j, \jt) = 0
\]
Hence the $m$th derivative $\llp \Phi_{\ell}(j, \jt) \rrp^{(m)} = 0$ as well. Using Proposition~\ref{prop:diffeq} and the prior observation we can write
\[
0 = \llp \Phi(j, \jt) \rrp^{(m)}  = \e_m(q) + \S_{u=0}^m \binom{m}{u} \ell^{m-u} (j')^u (\jt')^{n-u} \Phi_{X^u Y^{m-u}}(j, \jt)
\]
where
\[
\e_m(q) = \S_{0 \le u+v < m} \e_{u,v}(q) \Phi_{X^u Y^v}(j, \jt)
\]
As $(j_1, j_2)$ is a singular point of $Y_0(\ell)/k$ of multiplicity $m$ by ~\ref{prop:partialsvanish} we have $\Phi_{X^u Y^v}(j_1, j_2) = 0$ whenever $u+v<m$. As $j_1 \equiv j(q_0) \pmod{\f{P}}$ and $j_2 \equiv \jt(q_0) \pmod{\f{P}}$ we have \begin{align*}
\e_m(q_0) & = \S_{0 \le u+v < m} \e_{u,v}(q_0) \Phi_{x^u y^v}(j(q_0), \jt(q_0)) \\
& \equiv \S_{0 \le u+v < m} \e_{u,v}(q_0) \Phi_{x^u y^v}(j_1, j_2) \pmod{\f{P}} \\
& \equiv 0 \pmod{\f{P}}
\end{align*}
So both $\llp \Phi(j, \jt) \rrp^{(m)}(q_0)$ and $\e_m(q_0)$ are zero modulo $\f{P}$, hence their difference is also zero modulo $\f{P}$ hence \begin{align*}
0 & \equiv H^{(m)}(q_0) - \e_m(q_0) \pmod{\f{P}} \\
& \equiv \S_{u=0}^m \binom{m}{u} \ell^{m-u} (j'(q_0))^u (\jt'(q_0))^{n-u} \Phi_{X^u Y^{m-u}}(j(q), \jt(q)) \pmod{\f{P}}  \\
& \equiv \S_{u=0}^m \binom{m}{u} \ell^{m-u} (j_1')^u (\widetilde{r})^{n-u} \Phi_{X^u Y^{m-u}}(j_1,j_2)  \pmod{\f{P}}\\
& \equiv F_P(\widetilde{r}) \pmod{\f{P}}
\end{align*}
\end{proof}
Thus we have a map from the set of normalized isogenies $\phi\colon E_{A, B} \to E_{\widetilde{A}, \widetilde{B}}$ to the set of roots of $F_P(t)$. As $P$ is a singular point of multiplicity $m$ there exist $m$ distinct cyclic $\ell$-isogenies from $j_1$ to $j_2$. However it can happen that there are two (or more) normalized $\ell$-isogenies between two fixed elliptic curves. 
\begin{example}
     Consider the curves $E:y^2=x^3+x+4$ and $\widetilde{E}:y^2=x^3+12x+7$ over $\bb{F}_{169}$. These curves are isomorphic, supersingular, and are connected by two normalized $\bb{F}_{169}$-rational $5$-isogenies with distinct kernels. The kernel polynomials are the two degree $2$ factors of $x^4 + 10x^3 + 9x^2 + 12x + 6$ over $\bb{F}_{169}$. 
\end{example}

\begin{proposition}\label{prop:isogeny_FP_root_bijection}
    Assume now that either $\Char(k)=0$ or if $\Char(k)=p>0$ that $p>4\ell$. Then there is a bijection between the roots of $F_P(t)$ and the cyclic, normalized $\ell$-isogenies of $E_{A,B}$. 
    Provided the bijection exists it is given by the following: 
    \begin{enumerate}
    \item Given a model $(\widetilde{A},  \widetilde{B})$ for $E_{j_2}$ admitting a normalized cyclic separable $\ell$-isogeny from $E_{A, B}$ the quantity $r = \frac{18 \widetilde{B}}{\ell^2 \widetilde{A}} j_2$ is a root of $F_P(t)$.
\item Given a root $r$ of $F_P(t)$ if one sets \[
   \widetilde{A} = \frac{- \ell^4}{48} \frac{r^2}{j_2(j_2-1728)}, \quad \widetilde{B} = \frac{-\ell^6}{864}\frac{r^3}{j_2^2(j_2-1728)},
\]
then $E_{\widetilde{A}, \widetilde{B}}$ admits a normalized cyclic separable $\ell$-isogeny from $E_{A, B}$. 
\end{enumerate}
\end{proposition}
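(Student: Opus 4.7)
The plan is to establish the bijection by combining three facts: $F_P(t)$ has degree exactly $m$, there are exactly $m$ normalized cyclic $\ell$-isogenies out of $E_{A,B}$ with target $j$-invariant $j_2$, and the forward map $\phi \mapsto r_\phi$ from Proposition~\ref{prop:models_are_roots} is injective under the characteristic hypothesis. Once these are in place, the explicit inverse formula in (2) can be verified by direct algebraic manipulation of the $j$-invariant.

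First I would compute $\deg F_P = m$. The coefficient of $t^m$ in Definition~\ref{def:FiberPolynomial} is $\ell^m \Phi_{Y^m}(P)$, which is nonzero by Proposition~\ref{prop:partialsvanish}, so $F_P$ has at most $m$ roots in $\overline{k}$. For the count of isogenies, the multiplicity $m$ of $P$ on $Y_0(\ell)$ equals the number of cyclic subgroups $K \subset E_{A,B}[\ell](\overline{k})$ with $j(E_{A,B}/K) = j_2$, and each such $K$ corresponds to a unique normalized cyclic $\ell$-isogeny $\phi_K \colon E_{A,B} \to E_K$ to a canonical Weierstrass model $E_K$.

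Next I would prove injectivity of the forward map, which is the main obstacle. Suppose $\phi_1, \phi_2 \colon E_{A,B} \to E_{\widetilde{A},\widetilde{B}}$ are normalized cyclic $\ell$-isogenies producing the same root $r$; by the formula $r = \frac{18 \widetilde{B}}{\ell^2 \widetilde{A}} j_2$ they must share the same codomain model. Set $\psi := \phi_1 - \phi_2$ in the homomorphism group. The parallelogram identity for the degree quadratic form yields $\deg \psi \le 2(\deg \phi_1 + \deg \phi_2) = 4\ell$. Normalization gives $\psi^* \omega_{E_{\widetilde{A},\widetilde{B}}} = \phi_1^* \omega - \phi_2^* \omega = 0$, so $\psi$ is either zero or inseparable. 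In characteristic zero no nonzero inseparable isogeny exists, and in characteristic $p > 4\ell$ any nonzero inseparable isogeny has degree divisible by $p > 4\ell$, contradicting the bound on $\deg \psi$. Hence $\psi = 0$ and $\phi_1 = \phi_2$. As the supersingular example preceding the proposition shows, without the characteristic hypothesis two distinct normalized isogenies can share a target model, so the bound $p > 4\ell$ is exactly what is needed to preclude this pathology.

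With injectivity established, the forward map is an injection from a set of $m$ isogenies into a set of at most $m$ roots, hence a bijection. To verify that formula (2) gives the inverse, given a root $r$ of $F_P$ I would substitute the proposed formulas for $\widetilde{A}, \widetilde{B}$ into the $j$-invariant expression and check $j(E_{\widetilde{A},\widetilde{B}}) = j_2$ by a routine calculation, then confirm $\frac{18 \widetilde{B}}{\ell^2 \widetilde{A}} j_2 = r$ by direct substitution so that (1) and (2) compose to the identity. The bijectivity established above then guarantees that $E_{\widetilde{A},\widetilde{B}}$ is in fact the codomain of some normalized cyclic $\ell$-isogeny from $E_{A,B}$, so the assignment in (2) is well-defined.
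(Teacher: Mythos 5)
Your proof is correct and reaches the same structural endpoint as the paper --- an injection from the $m$ normalized isogenies into the at most $m$ roots of the degree-$m$ polynomial $F_P$, which forces a bijection --- but you prove the crucial injectivity by a genuinely different route. The paper splits injectivity into two steps: first, Lemma~\ref{lem:multi_normalized} shows that two normalized separable $\ell$-isogenies $E\to E'$ with distinct kernels force $0<\Char(k)<4\ell$, by forming $\alpha=\widehat{\psi}\circ\phi\in\End(E)$, observing that $\alpha-\ell$ annihilates the invariant differential and is therefore inseparable, and then bounding the trace $t(\alpha)$ using negativity of the discriminant $t(\alpha)^2-4\ell^2$; second, it shows distinct codomain models yield distinct roots because equal ratios $\widetilde{B_i}/\widetilde{A_i}=\widetilde{B_j}/\widetilde{A_j}$ force the isomorphism scalar to satisfy $u_{ij}^2=1$. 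You instead take the difference $\psi=\phi_1-\phi_2$ in $\operatorname{Hom}(E,E')$, bound $\deg\psi\le 4\ell$ by the parallelogram law for the degree form, and note that $\psi^*\omega=0$ makes $\psi$ zero or inseparable of degree divisible by $p>4\ell$. This is a more elementary derivation of the same $4\ell$ threshold, avoiding traces and discriminants entirely, and it explains the sharpness of the hypothesis just as well. Your handling of item (2) --- verify by direct substitution that the displayed formulas return $j$-invariant $j_2$ and ratio $\frac{18\widetilde{B}}{\ell^2\widetilde{A}}j_2=r$, then invoke the surjectivity already established --- is also cleaner than the paper's sketch via Deuring lifts and the Eisenstein-series identities of Schoof. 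One small point to make explicit: when you assert that two isogenies producing the same root ``must share the same codomain model,'' the root a priori only pins down the ratio $\widetilde{B}/\widetilde{A}$; you need the one-line observation (exactly the paper's $u_{ij}$ computation, valid because $j_2\not=0,1728$ so $\widetilde{A},\widetilde{B}\not=0$) that two short Weierstrass models of isomorphic curves with equal ratio are related by $u$ with $u^2=1$ and hence coincide. With that sentence added, the argument is complete.
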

For this, we need the following lemma:
\begin{lemma}\label{lem:multi_normalized}
    Let  $E,E'$ be elliptic curves over a field $k$ and whose $j$-invariants are not $0$ or $1728$.  Suppose there exist two normalized, separable $\ell$-isogenies $\phi,\psi\colon E\to E'$ with distinct kernels. Then $0<\Char(k)<4\ell$. 
\end{lemma}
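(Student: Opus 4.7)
The plan is to show that the isogeny $\phi - \psi$ must be nonzero and inseparable, then to bound its degree strictly between $0$ and $4\ell$, which in turn forces $\Char(k)$ to lie in that range. First I would use that $\phi$ and $\psi$ are both normalized, so $\phi^*\omega_{E'} = \omega_E = \psi^*\omega_{E'}$. Pullback of an invariant differential is additive in the isogeny (the addition map on $E'$ pulls an invariant differential back to the sum of the two projections applied to it), so $(\phi - \psi)^*\omega_{E'} = 0$. Since $\ker\phi \neq \ker\psi$ we have $\phi \neq \psi$, and a nonzero isogeny for which the invariant differential pulls back to zero is necessarily inseparable. This immediately gives $p := \Char(k) > 0$ and $p \mid \deg(\phi-\psi)$.

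For the upper bound I would use the standard trick of forming $\alpha := \hat\phi\circ\psi \in \End(E)$, so that $\hat\alpha = \hat\psi\circ\phi$ and $\deg\alpha = \ell^2$. Expanding
\[
\widehat{(\phi-\psi)}\circ(\phi-\psi) = \hat\phi\phi - \hat\phi\psi - \hat\psi\phi + \hat\psi\psi = [2\ell] - (\alpha + \hat\alpha) = [2\ell - \tr\alpha]
\]
shows that $\deg(\phi-\psi) = 2\ell - \tr\alpha$. Because the degree form is a positive-definite quadratic form on $\End(E)\otimes\mathbb{R}$, the element $\alpha$ satisfies its reduced characteristic polynomial $x^2 - (\tr\alpha)x + \ell^2$ with nonpositive discriminant, giving $|\tr\alpha| \le 2\ell$.

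The remaining step is to exclude the two extreme values. If $\tr\alpha = 2\ell$, then $(\alpha - [\ell])^2 = 0$ in $\End(E)$; since $\End(E)$ is an order in a division algebra and hence has no zero divisors, we would get $\alpha = [\ell] = \hat\phi\phi$, so $\hat\phi\circ(\psi-\phi) = 0$, and because $\hat\phi \neq 0$ this forces $\psi = \phi$, contradicting distinct kernels. Symmetrically, $\tr\alpha = -2\ell$ forces $\alpha = -[\ell]$ and hence $\psi = -\phi$, whose kernel coincides with $\ker\phi$ since $[-1]$ is an automorphism of $E'$, again a contradiction. Therefore $0 < 2\ell - \tr\alpha < 4\ell$, and since $p$ divides this positive integer, $0 < p < 4\ell$.

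The main obstacle is the first step: one must cleanly justify that pullback of an invariant differential is additive in the isogeny (so that the normalization hypotheses combine to give inseparability of the difference). Once that is established the rest is a Hasse-style degree-bound computation, with the endpoint analysis relying only on the absence of zero divisors in $\End(E)$.
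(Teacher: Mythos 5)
Your proof is correct and follows essentially the same route as the paper: the paper applies the normalization hypothesis to $\alpha=\widehat{\psi}\circ\phi$, deduces that $\alpha-[\ell]=\widehat{\psi}\circ(\phi-\psi)$ is inseparable, and bounds $p$ via $\deg(\alpha-[\ell])=\ell(2\ell-t(\alpha))$ together with the strict trace bound $|t(\alpha)|<2\ell$. Working directly with $\phi-\psi$ as you do is a cosmetic streamlining (it avoids cancelling the factor $\ell$, i.e.\ the separate observation that $p\nmid\ell$), and your endpoint analysis via $(\alpha\mp[\ell])^2=0$ is just a more explicit version of the paper's assertion that the discriminant $t(\alpha)^2-4\ell^2$ is strictly negative because $\alpha$ is cyclic of degree $\ell^2$ and hence not $\pm[\ell]$.
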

\begin{proof}
The existence of two distinct subgroups of order $\ell$ in $E$ implies that $E[\ell]\cong (\bb{Z}/\ell\bb{Z})^2$ and in particular $\Char(k)\not=\ell$. Consider the (necessarily separable) endomorphism $\alpha=\widehat{\psi}\circ\phi\in\End(E)$. Since $\ker\phi\not=\ker\psi$, the endomorphism $\alpha$ is cyclic of degree $\ell^2$ and therefore $\alpha\not=\pm \ell$. Since both $\phi$ and $\psi$ are normalized, we have that $\alpha^*\omega=\ell\omega$ for any invariant differential of $E$. Therefore the endomorphism $\alpha-\ell$ is inseparable, so $p=\Char(k)>0$ and the degree of $\alpha-\ell$ is divisible by $p$. Let $t(\alpha)=\alpha+\widehat{\alpha}\in \mathbb{Z}$ denote the trace of $\alpha$.  The degree of $\alpha-\ell$ is  
\[
\deg(\alpha-\ell) = 2\ell^2-\ell t(\alpha) \equiv 0\pmod{p},
\]
Since $\alpha$ is separable, the prime $p$ does not divide $\ell$, so $t(\alpha)-2\ell\equiv 0 \pmod{p}$.   On the other hand, we have that the discriminant $t(\alpha)^2-4\ell^2$ is negative, so $-2\ell<t(\alpha)<2\ell$. This implies 
\[
-4\ell<t(\alpha)-2\ell<0
\]
so $p<4\ell$, as claimed.
\end{proof}
We are now ready to prove Proposition~\ref{prop:isogeny_FP_root_bijection}.
\begin{proof}[Proof of Proposition~\ref{prop:isogeny_FP_root_bijection}]

    Since $P$ is a point of multiplicity $m$ on $Y_0(\ell)$, there exist $m$ different $\ell$-isogenies of $E=E_{A,B}$ with distinct kernels to curves with $j$-invariant $j_2$. Let $(\widetilde{A_1},\widetilde{B_1}),\ldots,(\widetilde{A_m},\widetilde{B_m})$ denote the coefficients of the short Weierstrass equations for these $m$ codomains. By Lemma~\ref{lem:multi_normalized}, these curves are all distinct. On the other hand, since they are all isomorphic, for each $i,j$ there exists $u_{ij}\in \overline{k}$ such that $\widetilde{A_i} = u_{ij}^4\widetilde{A_j}$ and $\widetilde{B_i}=u_{ij}^6\widetilde{B_j}$. Define $r_i = \frac{18}{\ell^2}{j_2}\frac{\widetilde{B_i}}{\widetilde{A_i}}$ for each $1 \le i \le m$. We claim that the $r_i$ are distinct. 
    
      Observe that if 
    \[
    r_i = {j_2}\frac{18\widetilde{B_i}}{\ell^2\widetilde{A_i}} = {j_2}\frac{18\widetilde{B_j}}{\ell^2\widetilde{A_j}} = r_j
    \]
    then 
    \[
     \frac{\widetilde{B_j}}{\widetilde{A_j}} = \frac{\widetilde{B_i}}{\widetilde{A_i}} =u_{ij}^2 \frac{\widetilde{B_j}}{\widetilde{A_j}}
    \]
    so $u_{ij}=\pm1$, which would imply $(\widetilde{A_i},\widetilde{B_i})=(\widetilde{A_j},\widetilde{B_j})$. Thus $r_i\not=r_j$ for $i\not=j$. By Proposition ~\ref{prop:models_are_roots} each of the $r_i$ is a root of $F_P$, thus we have constructed $m$ distinct roots of the degree $m$ polynomial $F_P$ hence as claimed each root of $F_P$ corresponds to a model $(\widetilde{A}_n, \widetilde{B}_n)$ and vice versa. 
    The first formula to obtain $r_i$ from $\widetilde{A}_i, \widetilde{B}_i$ is proven in ~\ref{prop:models_are_roots}. The formula to obtain $\widetilde{A}_i, \widetilde{B}_i$ from $r_i$ is proven by taking Deuring lifts of $\widetilde{A}_i, \widetilde{B}_i, j_2$ and $\jt'$ and expressing these lifts in terms of Eisenstein series, then one uses the power series identities in \cite[Proposition 7.1]{Schoof} applied to $j(q_0^\ell), \jt(q_0)^\ell, G_4(q_0^\ell), G_6(q_0^\ell)$ to manipulate the expressions for $r_i^2, r_i^3$ and obtain the claimed formulas.

\end{proof}

\section{Pseudocode and examples}\label{sec:algorithm}

~\ref{prop:isogeny_FP_root_bijection} gives a polynomial time algorithm which upon input $(j_1, j_2)$ a multiplicity $m$ point on $Y_0(\ell)$ returns every model for $E_{j_2}$ which permits a normalized isogeny from $j_1$. We simply compute the polynomial given in definition ~\ref{def:FiberPolynomial}, find its roots, and compute the $\widetilde{A}, \widetilde{B}$ corresponding to each root we computed. 
\begin{algorithm}[ht!]
\caption{Basic Algorithm To Compute Normalized Models for the Isogenous Curve over Finite Fields}\label{alg:cap}
\begin{algorithmic}
\Require A prime $\ell \neq \Char(k)$ such that $4 \ell \le \Char(k)$ or $\Char(k) = 0$, a singular point $(j_1, j_2) \in \mc{V}_k(\Phi_{\ell}(X, Y))$ of multiplicity $m$ with $j_1,j_2 \neq 0,1728$, and a $k$-model $(A, B)$ for $E_{j_1}$. 
\Ensure $\{ (A_1, B_1), \ldots, (A_m, B_m) \}$ such that for all $1 \le i \le m$ the cyclic $\ell$-isogeny from $E_{j_1}$ with model $(A,B)$ to $E_{j_2}$ with model $(A_i, B_i)$ is normalized.
\State $\Phi \gets \Phi_{\ell}(X,Y) \in k[X,Y]$ \Comment{Compute the $\ell$th modular polynomial.} 
\State $j_1' \gets \frac{18B}{A}j_1$  \Comment{Compute the reduction of $j_1':=j'(q_0)$ to $k$.}
\For{$0 \le u \le m$}
\State $c_u \gets \binom{m}{u} \ell^{m-u} (j_1')^u \Phi_{X^u Y^{m-u}}(j_1, j_2)$ \Comment{Compute Coefficients of $F_P$.}
\EndFor
\State $F(t) \gets \S_{u=0}^m c_u t^u$ \Comment{Instantiate $F_P$.}
\State $\{r_1, \ldots, r_m \} \gets \text{Roots{$(F)$}}$ \Comment{Compute the roots of $F_P$.}
\For{$1 \le i \le m$} \Comment{Each $r_i$ is a valid $j_2'$.} 
\State $A_i  = - \frac{\ell^4}{48} \frac{r_i^2}{j_2 (j_2 - 1728) } $
\State $B_i = - \frac{\ell^6}{864} \frac{r_i^3}{j_2^2(j_2 - 1728)}$
\EndFor
\State {Return} $\{ (A_1, B_1), \ldots, (A_m, B_m) \}$. 
\end{algorithmic}
\end{algorithm}

\begin{example}
Take $k = \mathbb{F}_{137^2} \cong \mathbb{F}_{137}(w)$ where $w$ has minimal polynomial $x^2 + 131x + 3$ over $\mathbb{F}_{137}$. Fix $j_1 = 136$ and $j_2 = 22$ regarded in $k$, as well as $P = (j_1, j_2) \in k^2$ and $\ell = 5$. Note that $1728 \equiv 84 \pmod{137}$ so neither $j_1$ nor $j_2$ are in $\{0, 1728\}$. 
The instantiated modular polynomial $\Phi_5(136,y)$ has a double factor of $y-22$ so there are two distinct 5-isogenies from $j_1 = 136$ to $j_2=22$. Fix a model $E_{A, B}: y^2 = x^3 + 19x + 65$ for $E_{136}$ and compute $j_1' = \frac{18 \cdot 65}{19} \cdot 136 \equiv 61$. We compute the partial derivative evaluations $\Phi_{XX}(P), \Phi_{XY}(P)$, and $\Phi_{YY}(P)$ as 
\begin{align*}
\Phi_{XX}(P)= 79, \quad \Phi_{XY}(P) = 6, \quad \Phi_{YY}(P)  = 5
\end{align*}
We can now compute $F_P(t)$ as \begin{align*}
F_P(t) & = \binom{2}{0} \ell^{2} (j_1')^0 \Phi_{YY}(P) t^2 + \binom{2}{1} \ell^{2-1} (j_1')^{1} \Phi_{XY}(P)t + \binom{2}{2} \ell^{2-2} (j_1')^2 \Phi_{XX}(P) t^0 \\ 
& = 5^2 \cdot 5 \cdot t^2 + 2 \cdot 5 \cdot 61 \cdot 6t + 61^2 \cdot 79 \\
& \equiv 125t^2 + 98t + 94 \pmod{137}
\end{align*}
The roots of $F_P(t)$ are \[
r_1 = 134w+93, \quad r_2 = 3w+75
\] 
and the associated models are given by \[
\widetilde{A}_1 = 32w+118, \quad \widetilde{B_1} = 15w+136 , \quad \widetilde{A}_2 = 105w+136, \quad \widetilde{B}_2 = 122w+89
\]
The 5-isogeny from $E_{A, B} \to E_{\widetilde{A}_1, \widetilde{B}_1}$ has kernel polynomial \[
f_1 = x^2 + (32w+84)x+(107w+66)
\]
and the 5-isogeny from $E_{A, B} \to E_{\widetilde{A}_2, \widetilde{B}_2}$ has kernel polynomial \[
f_2 = x^2 + (105w+2)x+(30w+23)
\]
\end{example}

\begin{remark}
    In Algorithm~\ref{alg:cap}, we compute the $\ell$th modular polynomial in order to compute the polynomial $F_{(j_1,j_2)}(t)$ to compute $\widetilde{j}'$. Alternatively, one could modify Sutherland's algorithm~\cite{SutherlandEvaluation} for computing the instantiated modular polynomial $\phi(Y)\coloneqq \Phi(j_1,Y)$ and derivative $\phi_X(Y)\coloneqq\Phi_X(j_1,Y)$ to  
    additionally compute $\phi_{X^{i}}(Y)=\Phi_{X^{i}}(j_1,Y)$ for $i=1,2,\ldots,m$ at a multiplicity $m$ singular point $(j_1,j_2)$ of $Y_0(\ell)$ and then compute the polynomial $F_{(j_1,j_2)}$ using these instantiated polynomials. 
\end{remark}
\begin{remark}
    Elkies' method computes not only the model $\widetilde{E}$ such that a normalized $\ell$-isogeny $E\to \widetilde{E}$ exists; by computing the second derivative $\widetilde{j}''$, one can compute $\sigma$, the $(\ell-3)/2$ coefficient of the kernel polynomial, i.e. the sum of the $x$-coordinates of the affine points in the kernel. From this coefficient, the rest can be determined (see for example~\cite[Algorithm 28]{Galbraith} and~\cite[Propositions 7.1, 7.2, 7.3]{Schoof}). Schoof computes $\widetilde{j}''$ by differentiating the modular equation twice; one should be able to solve for $\widetilde{j}''$ at a singular point of multiplicity $m$ by differentiating the modular equation $m+1$ times. Now one would have to match possible values for $\widetilde{j}''$ with the possible values for $\widetilde{j}'$. This could be advantageous, since the algorithm of~\cite{BMSS} is faster (in terms of field operations) by a factor of $\log \ell$ when $\sigma$ is known. 
\end{remark}

\section{Acknowledgements}
We would like to thank Leonardo Mihalcea for helpful discussions. The first author was supported by the Julian Chin Ph.D. Fellowship in Cybersecurity.  This research was funded in part by the Commonwealth of Virginia’s Commonwealth Cyber Initiative (CCI). CCI is an investment in the advancement of cyber R\&D, innovation, and workforce development. For more information about CCI, visit \url{www.cyberinitiative.org.}

\bibliographystyle{alpha}
\bibliography{bib.bib}

\end{document}